\newcommand{\Prb}{\mathrm{P}}
\newcommand{\ol}{\overline}
\newcommand{\ul}{\underline}
\newcommand{\thetaH}{\ol{\theta}}
\newcommand{\thetaL}{\ul{\theta}}
\newcommand{\Hc}{\mathcal{H}}
\newcommand{\Rbb}{\mathbb{R}}
\newcommand{\q}{q}
\title{\LARGE \bf
Privacy and Customer Segmentation in the Smart Grid
}
\author{Lillian J. Ratliff, Roy Dong, Henrik Ohlsson, Alvaro A. C\'{a}rdenas and S. Shankar Sastry
\thanks{L. J. Ratliff, R. Dong, H. Ohlsson, and S. S. Sastry are with Faculty of Electrical Engineering and Computer Sciences, 
        University of California, Berkeley, Berkeley, CA, 94707, USA
        {\tt\footnotesize $\{$ratliffl, roydong, ohlsson, sastry$\}$@eecs.berkeley.edu}}%
        \thanks{A. A. C\'{a}rdenas is with the Department of Computer Science, University of Texas, Dallas,
        Dallas, TX 75080, USA
        {\tt\footnotesize alvaro.cardenas@utdallas.edu}}%
}
\begin{document}

\maketitle
\thispagestyle{empty}
\pagestyle{empty}

\begin{abstract}

In the electricity grid, networked sensors which record and transmit increasingly high-granularity data are being deployed. In such a setting, privacy concerns are a natural consideration. 
We present an attack model for privacy breaches, and, using results from estimation theory, derive theoretical results ensuring that an adversary will fail to infer private information with a certain probability, independent of the algorithm used. 
We show utility companies would benefit from less noisy, higher frequency data, as it would improve various smart grid operations such as load prediction. We provide a method to quantify how smart grid operations improve as a function of higher frequency data.  In order to obtain the consumer's valuation of privacy, we design a screening mechanism consisting of a menu of contracts to the energy consumer with varying guarantees of privacy. The screening process is a means to segment customers. Finally, we design insurance contracts using the probability of a privacy breach to be offered by third-party insurance companies. 

\end{abstract}

\section{Introduction}
Increasingly advanced metering infrastructure (AMI) is replacing older technology in the electricity grid. Smart meters send detailed information about consumer electricity usage over a network every half-hour, quarter-hour, or in some cases, every five minutes. This high-granularity data is needed to support energy efficiency efforts as well as demand-side management. 
However, improper handling of this 
information could also lead to unprecedented invasions of consumer privacy~\cite{quinn:2009aa, salehie:2012aa}.

Given that smart grid operations inherently have privacy and security risks~\cite{salehie:2012aa},
it would benefit the utility company, to know the answer to the following questions: How do consumers in the population value privacy? How can we quantify privacy? 
How do privacy-aware policies impact smart grid operations? In this paper we address these questions as well as expose new directions for future research on privacy and customer segmentation in the smart grid. 


Using our results on the fundamental limits of non-intrusive load monitoring~\cite{dong:2013aa}, we are able to come up with probabilities for the success of an attack by an adversarial agent independent of the algorithm. Then using these probabilities we can design a screening mechanism consisting of a menu of contracts to be offered to consumers.  One set of contracts to be offered by the utility company assess how the consumer values privacy thereby revealing his preferences. 
Based on their valuation of privacy as a good, consumers can select the quality of the service contract with the utility company. Essentially, electricity service is offered as a product line differentiated according to privacy where consumers can select the level of privacy that fits their needs and wallet. The screening process is a way to do customer segmentation the result of which can lead to targeting.

In particular, using knowledge of consumer preferences, the utility company could then incentivize consumers based on their preferences to choose a low privacy setting which helps increase the granularity of data for use by the utility company for programs like demand response, direct load control, etc. In addition, third-party insurance companies can design insurance contracts. Insurance allows the consumer to protect himself in the event of a privacy breach, i.e. she will be compensated for any experienced loss. 

The paper is organized as follows. In Section~\ref{sec:privacy} we review the problem of non-intrusive load monitoring (NILM) and show how NILM leads to our novel privacy metric. We show in an example that the probability of an adversary successfully implementing a privacy breach decreases with a decrease in sampling rate. We discuss the impact of sampling rate on smart grid operations in Section~\ref{sec:smart_grid}. In Section~\ref{sec:screen} we use the privacy metric to design a screening mechanism that consists of privacy contracts between the consumer and the utility company. Similarly, in Section~\ref{sec:insurance} we use the privacy metric to design insurance contracts. Finally, in Section~\ref{sec:conclusion} we summarize the results and discuss future research directions.


\section{Privacy Guarantees}
\label{sec:privacy}
In this section, we discuss our metric for privacy, and guarantees of privacy under this metric. For this paper, we restrict the scope of our analysis to data collection policies. Another important aspect of privacy is how data retention policies can alter privacy and smart grid performance. Such a topic is reserved for future research.

\subsection{Nonintrusive Load Monitoring}
Our formulation of privacy builds on recent research into nonintrusive load monitoring (NILM) algorithms, first proposed by Hart~\cite{hart:1992aa}. The goal of NILM is to use the aggregate power consumption signal, which can be measured by metering infrastructures without the placement of additional sensors inside the home, and make inferences on the load profile. For example, the problem of energy disaggregation is to recover the power consumption signals of individual devices~\cite{kolter:2011aa}. Another example would be to detect when devices switch on and off, which is often referred to as event-based NILM~\cite{anderson:2012aa}.

There are many approaches to the design of NILM algorithms, including but not limited to hidden Markov models (HMMs)~\cite{kolter:2012aa,johnson:2013aa,parson:2012aa}, sparse coding methods~\cite{kolter:2010aa}, and dynamical systems approaches~\cite{dong:2013ab}. These methods vary in the prior information required; some are completely unsupervised and nonparametric, while others require a large amount of disaggregated data to build a dictionary. However, in our recent work, we provided a unifying framework for modeling all of these algorithms~\cite{dong:2013aa}.

To the best of our knowledge, every approach to energy disaggregation gives devices a certain kind of model, be it an HMM, dictionary, or dynamical system. Most of these approaches have an input space: for HMMs, the input is a sequence of latent state transitions; for the sparse approaches, it is a sparse vector corresponding to the most representative elements in the dictionary, and in dynamical systems, it is an input signal to the systems.

In all these approaches, a fixed input yields a probability distribution on the device's power consumption; this distribution is often assumed to be Gaussian. Finally, note that in all the mentioned frameworks, recovering the input is almost the same as recovering the device's power consumption signal.

Other NILM formulations also can fall into this framework. For example, in event-based NILM, if the task is to determine whether or not the air conditioner is in use, we can consider the probability distribution across aggregate power consumption signals when the air conditioner is on and when the air conditioner is off. In this case, whether or not the air conditioner is on serves as an input.

In our framework, NILM algorithms are abstracted as measurable functions operating on the aggregate power consumption signal, and we ask whether or not the algorithms can successfully distinguish between inputs. For a more comprehensive treatment of this topic, we refer the reader to~\cite{dong:2013aa}.

\subsection{Privacy Metric}
\label{subsec:pm}
Now, we are ready to introduce our metric of privacy. 
%
%

One of the common theoretical definitions for a privacy metric is the notion of differential privacy~\cite{dwork:2006aa,le-ny:2014aa}.  While differential privacy has many attractive properties, it is most useful when we want to share data via a trusted third party aggregator, or by injecting noise in the original messages sent to a third party; however, for many practical, regulatory, dispute resolution, performance, or business reasons, there will always be several cases where we need to get access to the raw data, and in these cases differential privacy will not help us identify a good security mechanism to prevent raw data from being compromised. 



In contrast, we fix a definition of a privacy breach where the user has a set of possible inputs, and he wishes to keep the true input private. For example, the definition of privacy breach might be whether or not an adversary knows if the user is doing dishes in the dishwasher, watching TV, or exercising on a treadmill in the evening.  This notion of equivocation is related to recent work in privacy who measures privacy not with differential privacy but with equivocation metrics~\cite{shokri2011quantifying}.


We also assume a powerful adversary that knows the probability distribution of the energy consumption of digital signals, and their prior probability for being active in an aggregated signal.  As an observation, we assume the adversary has access to the aggregate power consumption signal, but not the device-level signals, for a building.

More formally, suppose there are two inputs $u_1$ and $u_2$. For each input, the aggregate power consumption signal follows distributions $F_1$ and $F_2$, respectively. In this paper, we assume these distributions to be Gaussian; however, we consider the general case in \cite{dong:2013aa}.

Suppose the distributions have means $\mu_1$ and $\mu_2$, and both distributions have the same covariance $\sigma^2 I$. 
Let $a = \sigma^{-2} \left( \mu_0 - \mu_1 \right)$ and $b = \frac{1}{2\sigma^2} \left( \|\mu_1\|_2^2 - \|\mu_2\|_2^2  \right)$. Suppose the adversary uses any estimator $\hat{u}$ and suppose the events $\{u=u_1\}$ and $\{u=u_2\}$ are equally likely. Then, the probability of our adversary successfully distinguishing two inputs is bounded by
 \begin{equation}
 \label{eq:norm_cdf}
 P(\hat{u}=u)\leq\frac{1}{2} \left( 1 - \operatorname{erf}\left(\frac{-\frac{1}{\|a\|_2}(a^T \mu_0 + b)}{\sqrt{2\sigma^2}}\right) \right)
 \end{equation}
 where $\operatorname{erf}$ is the Gauss error function. More details can be found in \cite{dong:2013aa}.

\section{Smart grid operations}
\label{sec:smart_grid}
In Section \ref{sec:privacy}, we developed a metric for privacy. 
If privacy is the only thing of concern, a trivial solution is to record nothing transmit nothing. However, the utility company has other objectives than just preserving the privacy of its consumers. Hence, privacy issues arise because the sensitive data has other uses.
Such polices as noise injection or varying the sampling rate can be employed to protect against privacy breaches while still allowing the utility company to operate.

In advanced metering infrastructures, the data is used to improve the performance of smart grid operations. How smart grid operations degrade under different metering policies is an active topic of research; for preliminary investigations, see \cite{Cardenas2012,Dong2014}. Intuitively, the performance will degrade as fewer samples are collected or more noise is added. We attempt to quantify this degradation. In this section, we develop an direct load control example to demonstrate how smart grid operations performance is affected by different sampling rates.

\subsection{Direct Load Control}
\label{subsec:dlc}
The problem of direct load control has recently been studied as viable option to improve smart grid operations~\cite{Callaway2011,mathieu:2013aa}.

Generator output is generally determined by two processes: unit commitment and economic dispatch. \emph{Unit commitment} is done in advance, and sets the generator ramping schemes. \emph{Economic dispatch} is done online, and determines the output levels of generators that are already online to meet total demand.

When the demand exceeds the output capacity of all online generators, economic dispatch schemes will use generators with quick ramp-up times to ensure stability of the power grid. These generators are very inefficient. One goal of direct load control (DLC) as an economic dispatch scheme is to reduce the deviation of actual demand from the forecasted demand.

Consider the direct load control model:
\begin{equation}
x_{k+1} =  x_k + u_k + \mu_k + d_k
\end{equation}
Here, $x_k \in \Rbb$ represents the power consumption of a unit at time $k$, where a unit can be a household, an HVAC system for a building, or a sector of the power grid. 
$u_k \in \R$ represents the direct load control signal at time $k$. $\mu_k \in \R$ represents the affine term which generates our nominal demands at time $k$; if $u_k \equiv 0$ and $d_k \equiv 0$, then $\mu_k$ creates our forecasted demand. Finally, $d_k$ represents the disturbance at time $k$. In this model, disturbances from the nominal demand persist, and DLC policies must be employed to return the power consumption to the nominal demand.

Now, consider different sampling rates. That is, we suppose our controller is only able to receive measurements every $N$ time steps. However, it is still able to issue control commands at every time step. We wish to design a controller that makes use of the available measurements to optimally issue control commands to a sector of the power grid.

The subsampled system can be modeled in a Markov jump linear system (MJLS) framework. To define optimality, we consider the $\Hc_\infty$ norm of MJLSs, as defined in \cite{Seiler2005,Ishii2008}. In our application, the $\Hc_\infty$ norm represents a worst case estimate of how much the true power consumption will deviate from the power consumption used for unit commitment. This worst case estimate is a function of the uncertainty in the load forecast.

Recent results in the analysis of MJLS gives us the optimal $\Hc_\infty$ controller for subsampled scalar systems~\cite{Dong2014}. Thus, we can analyze how the performance of direct load control is affected by different sampling schemes. For example, Figure~\ref{fig:equi_fig_1} gives us the performance for equidistant sampling. 
Thus, the formulation allows us to quantify the value of a higher sampling rate to the utility company.
\begin{figure}[ht]
	\begin{center}
	\includegraphics[scale=0.4]{}
	\end{center}
        \caption{ The $\Hc_\infty$ norm for the equidistant sampling scheme as a function of the sampling interval $N$. A higher sampling interval $N$ corresponds to a lower sampling frequency, i.e. the utility company receives less data.}
	\label{fig:equi_fig_1}
\end{figure}


\section{Privacy Contracts}
\label{sec:screen}

In this section, we discuss how the utility company can design a screening mechanism in order to assess the consumer's unknown type. This is a mechanism design problem with asymmetric information. The utility company designs a screening mechanism whose contracting device is the privacy setting offered to the consumer. This screening process can be thought of as customer segmentation since it will extract each consumer's type after which the consumers can be grouped according to their type.



We consider a model in which there are only two types and we utilize standard results from the theory of screening (see, e.g.,~\cite{weber:2011aa}) to develop a framework for designing privacy contracts. We remark that as a result of the screening process the utility company will know how each consumer values privacy and can leverage that in the design of incentives aimed at inducing the consumer to select a privacy setting more desirable from the perspective of the utility company.

\subsection{Two Types: High-Privacy and Low-Privacy Settings}
\label{subsec:contract}
We model privacy-settings on smart meters as a good. The \emph{quality} of the good is either a high-privacy setting $\q_H$ or a low-privacy setting $\q_L$. The consumer can choose either a high privacy setting or a low privacy setting, i.e. the consumer selects $\q\in\mc{Q}=\{\q_H, \q_L\}\subset \mb{R}$ where $\q_L\leq \q_H$ and $-\infty<\q_L<\q_H<\infty$.
  The consumer's valuation of privacy is his \emph{type} which takes values $\theta\in\{\thetaL, \thetaH\} \subset \mb{R}$ where $\theta$ represents how much the consumer values high-privacy over low-privacy and $\thetaL<\thetaH$. 
 We assume that type $\theta$ is distinct from the private information itself; by this we mean that how much the consumer values privacy is not also private information. We note here that these types implicitly make use of the probabilities presented in Section \ref{sec:privacy}.

The consumers type $\theta$ is related to his willingness to pay in the following way: if the utility company announces a price $t$ for choosing $\q$, the type-dependent consumer's utility is equal to zero if he does not select a privacy setting $\q$, and it is 
\begin{equation}
  U(\q,\theta)-t\geq 0
  \label{eq:ir}
\end{equation}
if he does select a privacy setting. The case in which the consumer does not select a privacy setting is considered the \emph{opt-out} case in which consumer exercises his right to not participate. 
The inequality in \eqref{eq:ir} is often called the \emph{individual rationality} constraint and in the design of the privacy contracts we will enforce it in order to make sure that all consumers will opt-in. The function $U:\mb{R}\times \Theta\rar \mb{R}$ is assumed to be strictly increasing in $(\q, \theta)$, concave in $\q$, and represents the consumer's preferences. 

Since we have only two types, the contracts offered will be indexed by the privacy settings $\q_L$ and $\q_H$.
Further, as we mentioned the consumer can opt-out by not selecting a privacy option at all. Hence, we need to constrain the mechanism design problem by enforcing the inequality given in Equation~\eqref{eq:ir} for each value of $\theta\in \{\thetaL, \thetaH\}$. In addition, we need to enforce \emph{incentive-compatibility} constraints  
\begin{equation}
  U(\q_H, \thetaH)-t_H\geq U(\q_L, \thetaH)-t_L
  \label{eq:ic1}
\end{equation}
and 
\begin{equation}
  U(\q_L, \thetaL)-t_L\geq U(\q_H, \thetaL)-t_H
  \label{eq:ic2}
\end{equation}
where the first inequality says that given the price $t_H$ a consumer of type $\thetaH$ should prefer the high-privacy setting $\q_H$ and the second inequality says that given the price $t_L$ a consumer of type $\thetaL$ should prefer the low-privacy setting $\q_L$.

The utility company has unit utility
\begin{equation}
  v(\q, t)=-g(\q)+t
  \label{eq:uc_utility}
\end{equation}
where we assume that the function $g:\mc{Q}\rar\mb{R}$ is the unit cost to the utility company for the privacy setting $\q$. We assume that it is a strictly increasing, continuous function which is reasonable because, as we have mentioned in Section \ref{sec:smart_grid}, a low-privacy setting $\q_L$ provides the utility company with the high-granularity data it needs to efficiently operate and maintain the smart grid. For example, recall Section~\ref{subsec:dlc} in which we show that the performance of DLC degrades with decreases in sampling rate.

The screening problem is to design the contracts, i.e. $\{(t_L, \q_L), (t_H, \q_H)\}$ where $t_L, t_H\in \mb{R}$, so that the utility companies expected profit is maximized where the expected profit is
\begin{equation}
  \Pi(t_L, \q_L, t_H, \q_H)=(1-p)v(\q_L,t_L)+pv(\q_H, t_H)
  \label{eq:profit}
\end{equation}
where $p=\Prb(\theta=\thetaH)=1-\Prb(\theta=\thetaL)\in (0, 1)$ where $\Prb(\cdot)$ denotes probability.

To find the optimal pair of contracts, we solve the following optimization problem:
\begin{align} 
  \max_{\{(t_L, \q_L), (t_H, \q_H)\}}& \Pi(t_L, \q_L, t_H, \q_H)\tag{P-1} \label{eq:P1}
\\
\text{s.t.}\qquad&\ U(\q_H, \thetaH)-t_H\geq U(\q_L, \thetaH)-t_L\tag{IC-1}\label{eq:IC1}\\
&\ U(\q_L, \thetaL)-t_L\geq U(\q_H, \thetaL)-t_H\tag{IC-2}\label{eq:IC2}\\
&\ U(\q_L,\thetaL)-t_L\geq 0\tag{IR-1}\label{eq:IR1}\\
&\ U(\q_H,\thetaH)-t_H\geq 0\tag{IR-2}\label{eq:IR2}\\
&\ \q_L\leq \q_H\notag
 \end{align}

 Depending on the form of $U(\q, \theta)$ and $g(\q)$ problem~\eqref{eq:P1} can be difficult to solve.  
 So, we examine the constraints and try to eliminate as many as we can. 
 
First, we show that \eqref{eq:IR1} is active. Indeed, suppose not. Then, $U(\q_L, \thetaL)-t_L>0$ so that, from the first incentive compatibility constraint \eqref{eq:IC1}, we have
\begin{equation}
  U(\q_H, \thetaH)-t_H\geq U(\q_L, \thetaH)-t_L\geq  U(\q_L, \thetaL)-t_L>0
  \label{eq:binding}
\end{equation}
where the second to last inequality holds since $U(\q, \theta)$ is increasing in $\theta$ by assumption. 
As a consequence, the utility company could increase the price for both types since neither incentive compatibility constraint would be active. This would lead to an increase in the utility company's pay-off, i.e. a contradiction. Now, since $U(\q_L, \thetaL)=t_L$, the last inequality in \eqref{eq:binding} is equal to zero. This implies that \eqref{eq:IR2} is redundant.
Further, this argument implies that the constraint \eqref{eq:IC1} is active. 
Indeed, again suppose not. Then, 
\begin{equation}
   U(\q_H, \thetaH)-t_H> U(\q_L, \thetaH)-t_L\geq  U(\q_L, \thetaL)-t_L=0
  \label{eq:binding2}
\end{equation}
so that it would be possible for the utility company to decrease the incentive $t_H$ without violating \eqref{eq:IR2}. 

Now, let us assume that the marginal gain from raising the value of the privacy setting $\q$ is greater for type $\thetaH$, i.e. $U(\q, \thetaH)-U(\q, \thetaL)$ is increasing in $\q$.  Then, since \eqref{eq:IC1} is active, we have
\begin{equation}
  t_H-t_L=U(\q_H, \thetaH)-U(\q_L, \thetaH)\geq U(\q_H, \thetaL)-U(\q_L, \thetaL).
  \label{eq:binding3}
\end{equation}
This inequality implies that we can ignore \eqref{eq:IC2}. Further, since $U$ is increasing in $(\q, \theta)$ and we have assumed that $\thetaH>\thetaL$, we can remove the constraint $\q_L\leq \q_H$. We have reduced the constraint set to
\begin{align}
  t_H-t_L&=U(\q_H, \thetaH)-U(\q_L, \thetaH)\label{eq:newconst1}\\
 t_L&= U(\q_L, \thetaL)
 \label{eq:newconst2}
\end{align}
Thus, the optimization problem becomes 
\begin{align}
  \label{eq:newopt}
  \max_{(\q_L, \q_H)}&\bigg\{p(U(\q_H, \thetaH)-g(\q_H)-U(\q_L, \thetaH)+U(\q_L, \thetaL))\tag{P-2}\\
  &\quad (1-p)(U(\q_L, \thetaL)-g(\q_L))\bigg\}\notag
\end{align}
This reduces further to two independent optimization problems:
\begin{align}
  \label{eq:indpopts}
  &\max_{q_H}\{U(\q_H, \thetaH)-g(\q_H)\}\tag{P-3a}\\
  &\max_{q_L}\{-p(U(\q_L, \thetaH)-U(\q_L, \thetaL))+ \tag{P-3b}\\
  &\qquad\qquad(1-p)(U(\q_L, \thetaL)-g(\q_L))\} \notag
  \end{align}
\subsection{Direct Load Control Example}
\label{subsec:dlcex}
Recall that the unit gain the utility company gets out of the privacy setting $\q$ is a function $g:\mc{Q}\rar\mb{R}$. In this section, we discuss a particular example in which $g$ is a metric for how access to high-granularity data affects direct load control. In Section~\ref{subsec:dlc}, Figure~\ref{fig:equi_fig_1} shows how that as you decrease the sampling rate (increase the sampling interval) the performance degrades, i.e. the $\mc{H}_\infty$ norm increases, and it degrades in a linear way.  Hence, this motivates a choice for $g$ such that $g(\q_L)>g(\q_H)$ and decreases in a linear way. Hence, for this example, let \begin{equation}
  g(\q)=\zeta \q
  \label{eq:gxdlc}
\end{equation}
where $0<\zeta<\infty$.
Note that a decreased sampling rate corresponds to a higher privacy setting. The function $g$ as defined is increasing in $\q$ so that $g(\q_L)>g(\q_H)$. 

Assume that the consumer's utility is 
given by
\begin{equation}
  U(\q, \theta)=\frac{1}{2}(\bar{q}^2-(\q-\bar{\q})^2)\theta
  \label{eq:agentutility}
\end{equation}
where $q\in [0, \bar{q}]$ so that it is proportional to how close they are to the maximum privacy setting $\bar{q}$, and their type $\theta$. Suppose that $\theta\in\{\thetaL, \thetaH\}$ where $0<\thetaL<\thetaH$. Note that at $\thetaH$ the utility is $\frac{1}{2}\bar{q}^2$ and at $\thetaL$ the utility is zero.
$U$ satisfies the assumption that it is increasing. Let $p=\text{P}(\theta=\thetaH)$. Then, the optimal solutions to the screening problem are
\begin{equation}
  (\q_H^\ast, \q_L^\ast)=\left( \bar{q}-\frac{\zeta}{\thetaH}, \left[\bar{q}+\frac{(1-p)\zeta}{(p\thetaH-\thetaL)}\right]_+ \right)
  \label{eq:optH}
\end{equation}
where $\q_L^\ast=0$ when the probability $p$ is greater than the critical point $p^\ast=\thetaH/\thetaL$. 
The optimal prices $t_H^\ast, t_L^\ast$ can be found by plugging $(\q_H^\ast, \q_L^\ast)$ into \eqref{eq:newconst1} and \eqref{eq:newconst2}. 
If the utility company knew the types, then the optimal solution would be
\begin{equation}
   (\q_H^\dagger, \q_L^\dagger)=\left( \bar{q}-\frac{\zeta}{\thetaH},\bar{q}-\frac{\zeta}{\thetaL} \right)
  \label{eq:infobest}
\end{equation}
\begin{figure}[h]
  \begin{center}
    \begin{tikzpicture}
\draw[->] (0,0) -- (5,0) node[anchor=north] {$p$};
\draw	(0,0) node[anchor=north] {0}
		(2.5,0) node[anchor=north] {$p_0$}
		(4.5,0) node[anchor=north] {1};
\draw (4.5,0.1) -- (4.5, -0.1);
\node at (4.75,1.85) {$\q_L^\dagger$};
\node at (2.25, 1.05) {$\q_L^\ast$};
\node at (5.25, 2.5) {$\q_H^\ast=\q_H^\dagger$};
\draw[thick] (0,2.5) -- (4.5, 2.5);
\draw[dotted, thick] (0,1.85) -- (4.5, 1.85);
\draw[->] (0,0) -- (0,3) node[anchor=east] {$\q$};
\node at (-1, 2.5) {$\bar{\q}-\frac{\zeta}{\thetaH}$};
\node at (-1, 1.85) {$\bar{\q}-\frac{\zeta}{\thetaL}$};
\draw[thick] (0,1.85) parabola (2.5,0);
\end{tikzpicture}  
\end{center}
  \caption{Comparison between full information and asymmetric information solutions as a function of $p$ the probability of the high-type in the population.}
  \label{fig:info}
\end{figure}

In Figure~\ref{fig:info}, we show that as the probability of the high-type being drawn from the population increases, $q_L^\ast$ decreases away from the optimal full information solution $q_L^\dagger$. This occurs until $p=p_0=\thetaL / \thetaH$ 
which is the critical probability. After $p_0$, $q_L^\ast=0$ and remains there until $p=1$.

The social welfare is defined to be sum of the pay-off to the utility company and to the consumer.
The social welfare is given by
\begin{align}
  &\Psi^\ast(p)=\Pi(t_L^\ast, q_L^\ast, t_H^\ast, q_H^\ast)+p(U(q_H^\ast, \thetaH)-t_H^\ast).
  \label{eq:sw}
\end{align}
Let 
\begin{align}
  &\vphi=(1-p)\Bigg( \frac{1}{2}\left(\bar{q}^2-\left( \frac{(1-p)\zeta}{p\thetaH-\thetaL} \right)^2 \right)\thetaL\notag\\
  &\qquad\quad-\zeta\left( \bar{q}-\frac{(1-p)\zeta}{p\thetaH-\thetaL} \right)\Bigg).
  \label{eq:vphi}
\end{align}
Then,
\begin{align}
  \Psi^\ast(p)&=-p\zeta\left( \bar{q}-\frac{\zeta}{\thetaH} \right)\notag\\
  &+\left\{\begin{array}{ll}
    \hspace*{-0.15cm}\vphi,&\hspace*{-0.3cm} p\leq p_0\\
   \hspace*{-0.15cm} p\left( \frac{1}{2}\left( \bar{q}^2-\left( \frac{(1-p)\zeta}{p\thetaH-\thetaL} \right)^2\right)(\thetaH-\thetaL) \right), &\hspace*{-0.2cm} p>p_0\end{array}\right.
  \label{eq:part1}
\end{align}
The social welfare reaches a critical point at $p_0$ beyond which the utility company will exclude the low-type from the market and only provide privacy contracts to the high-type. This is called the \emph{shutdown solution}. It is reasonable that as soon as the probability of the utility company facing a consumer of high-type reaches a critical point, they will focus all their efforts on this type of consumer since a high-type desires a higher privacy setting which results in a degradation of the DLC scheme. 
We remark that people who value high privacy more need to be compensated more to participate in the smart grid. If there are two contracts, then even consumers who do not value privacy much will have an incentive to lie. Through the screening mechanism, the consumer will report his type truthfully.

\section{Privacy Insurance Contracts}
\label{sec:insurance}
In this section, we will design an insurance contract, to be offered by a third-party company to the consumer, that uses the probability $\eta$ that an adversary will fail to infer private information about the consumer.
In the previous section we designed contracts to get consumers to allow for lower privacy settings; in this section we design insurance contracts that allow consumers to purchase protection against attacks given they know the probability of a successful attack occurring. The analysis that follows is well known in the economics literature (see, e.g., \cite{rothschile:1976aa, jaynes:1978aa, mussa:1978aa}). Using the theory of insurance contracts when there is asymmetric information and the probability that an adversary can gain access to consumers' private information, we analyze both the consumer's choice on how much insurance to invest in as well as the insurer's decision about which contracts to offer to a population with both high- and low-risk consumers.

\subsection{Analysis of Consumer's Decision}
Let us start by analyzing the decision the consumer would make about selecting an amount of insurance given knowledge of $\eta$. Let the consumer's utility function be denoted by $\tilde{U}:\mb{R}\rar \mb{R}$ and assume that $\tilde{U}$ is increasing, twice differentiable and strictly concave. Let us suppose that the consumer is \emph{risk-averse} which means that the consumer, who makes a decision under uncertainty, will try to minimize the impact of the uncertainty on her decision. 

In addition, suppose the consumer has initial wealth $y$, runs the risk of loss $\ell$ with probability $1-\eta$. In the context of our problem, wealth represents \emph{private~information} that can be gained through analysis of consumer energy consumption data and loss represents exposure of this private information. Recall that $1-\eta$ is the probability with which an adversarial agent could gain access to private information about a consumer through access to their consumption data. 

The consumer must decide how much insurance to buy. Let the cost of one unit of insurance be $c$ and suppose that the insurer pays the consumer $\beta$ in the event that an adversary attacks them resulting in an exposure of private information where $\beta$ is the amount of insurance the consumer agrees to buy. Then the consumer wants to solve the following optimization problem:
\begin{align}
  \max_{\beta\geq 0}\{\eta \tilde{U}(y-\beta c)+(1-\eta)\tilde{U}(y+(1- c)\beta-\ell)\}\tag{P-4}
  \label{eq:cinsp}
\end{align}
Suppose that $\beta^\ast$ is a local optimum, then there exists a Lagrange multiplier $\lambda$ such that  
\begin{align}
  \left\{\begin{array}{l}
    0=-\lambda-\eta c \tilde{U}'(y-\beta^\ast c)\\
    \qquad\quad+(1-\eta)(1-c)\tilde{U}'(y+(1-c)\beta^\ast-\ell)\\
  0=\lambda\beta^\ast\\
  0\geq \beta^\ast\\
  0\geq \lambda
\end{array}\right.
  \label{eq:KKT}
\end{align}
These conditions are the Karush-Khun-Tucker (KKT) necessary conditions. Combining the first and the last condition, we get
\begin{equation}
  0\geq -\eta c \tilde{U}'(y-\beta^\ast c)+(1-\eta)(1-c)\tilde{U}'(y+(1-c)\beta^\ast-\ell)
  \label{eq:KKT1}
\end{equation}
We analyze the consumer's decision by considering two cases and we present the results in the following propositions.
\begin{prop}
  \label{prop:fair}
  Suppose that the consumer is offered privacy insurance at the rate $c=1-\eta$, i.e. at a rate equal to the probability of a successful attack. Then the consumer will choose to purchase an amount of insurance equal to the loss, i.e. $\beta^\ast=\ell$.
\end{prop}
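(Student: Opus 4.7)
The plan is to substitute the actuarially fair rate $c = 1-\eta$ directly into the KKT system \eqref{eq:KKT} and read off the optimum from strict concavity. With this substitution both marginal-utility coefficients collapse to the common value $\eta(1-\eta)$: the stationarity condition becomes
\begin{equation}
\lambda = \eta(1-\eta)\bigl[\tilde{U}'(y + (1-c)\beta^{\ast} - \ell) - \tilde{U}'(y - \beta^{\ast} c)\bigr].
\end{equation}
So the work reduces to interpreting this single equation together with complementary slackness.

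First I would rule out the corner $\beta^{\ast}=0$. Plugging $\beta^{\ast}=0$ into the display above gives $\lambda = \eta(1-\eta)\bigl[\tilde{U}'(y-\ell) - \tilde{U}'(y)\bigr]$. Because $\tilde{U}$ is strictly concave, $\tilde{U}'$ is strictly decreasing, so (for a genuine loss $\ell>0$) the bracket is strictly positive, and this contradicts the sign the multiplier is required to take for a boundary maximizer. Hence any local maximizer satisfies $\beta^{\ast}>0$, and by complementary slackness $\lambda=0$.

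Next I would exploit $\lambda=0$: the stationarity condition forces $\tilde{U}'(y - \beta^{\ast} c) = \tilde{U}'(y + (1-c)\beta^{\ast} - \ell)$. Strict concavity of $\tilde{U}$ makes $\tilde{U}'$ injective, so the two arguments must coincide, i.e.\ $y - \beta^{\ast} c = y + (1-c)\beta^{\ast} - \ell$. Routine algebra cancels $y$ and collapses the $c$-terms, leaving $\beta^{\ast} = \ell$, which is the claim. Uniqueness is automatic: the objective of \eqref{eq:cinsp} is a positive combination of strictly concave functions of $\beta$ (since $\tilde{U}$ is strictly concave and the arguments are affine in $\beta$), so the maximizer is unique, and the candidate $\beta^{\ast} = \ell$ is therefore the global optimum.

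The only subtle step is the corner analysis ruling out $\beta^{\ast}=0$, and that in turn depends on being careful about the sign convention for $\lambda$ in the paper's stated KKT conditions; everything else is mechanical substitution plus one use of the injectivity of a strictly monotone derivative. I would not expect any genuine obstacle beyond that bookkeeping.
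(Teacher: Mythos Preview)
Your proof is correct and uses the same ingredients as the paper---substitute $c=1-\eta$ into the KKT conditions, use strict concavity of $\tilde U$ to rule out the corner $\beta^\ast=0$, then pin down $\beta^\ast=\ell$. The execution differs slightly: the paper carries the combined inequality \eqref{eq:KKT1} throughout and argues by a two-sided contradiction (separately ruling out $0<\beta^\ast<\ell$ and $\ell<\beta^\ast$ by comparing $\tilde U'(\tilde y-\ell)$ with $\tilde U'(\tilde y-\beta^\ast)$), whereas you invoke complementary slackness to turn stationarity into an equality and then use injectivity of $\tilde U'$ to read off $\beta^\ast=\ell$ in one step. Your route is a bit cleaner and sidesteps the case analysis; the paper's route stays closer to the inequality form of the first-order condition but requires checking both directions. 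Your flagged caveat about the sign convention on $\lambda$ is the right thing to watch: the paper writes $0\geq\lambda$, so your computation $\lambda=\eta(1-\eta)[\tilde U'(y-\ell)-\tilde U'(y)]>0$ at $\beta^\ast=0$ is exactly the contradiction needed.
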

\begin{proof}
  Since $c=1-\eta$, \eqref{eq:KKT1} reduces to 
  \begin{equation}
    0\geq\eta (1-\eta)\big(\tilde{U}'(y+\eta\beta^\ast-\ell) - \tilde{U}'(y-\beta^\ast (1-\eta))\big) 
    \label{eq:prop1-1}
  \end{equation}
  and since $(1-\eta)\eta\geq 0$ this again reduces to
  \begin{equation}
    0\geq \tilde{U}'(y+\eta\beta^\ast-\ell) - \tilde{U}'(y-\beta^\ast (1-\eta))
    \label{eq:prop1-2}
  \end{equation}
  Recall that we assumed $\tilde{U}$ to a be a concave function and that a function is strictly concave if and only if its derivative $\tilde{U}'$ is decreasing. Hence,
  \begin{equation}
    \tilde{U}'(z)<\tilde{U}'(z-\ell)
  \label{eq:prop1-3}
\end{equation}
since $\ell>0$. This fact along with \eqref{eq:prop1-2} implies that $\beta^\ast>0$. Now, we claim that $\beta^\ast=\ell$. Indeed, suppose that $0<\beta^\ast<\ell$, then from \eqref{eq:prop1-2} we have
\begin{equation}
  \tilde{U}'(\tilde{y}-\ell)\leq \tilde{U}'(\tilde{y}-\beta^\ast)
  \label{eq:prop1-4}
\end{equation}
where $\tilde{y}=y+\eta\beta^\ast$. This inequality violates \eqref{eq:prop1-3}. On the other hand, suppose that $0\leq \ell\leq \beta^\ast$, then from \eqref{eq:prop1-3} we have
\begin{equation}
  \tilde{U}'(\tilde{y}-\beta^\ast)>\tilde{U}'(\tilde{y}-\ell)
  \label{eq:prop1-5}
\end{equation}
but this violates the KKT inequality \eqref{eq:prop1-2}. Hence, $\beta^\ast=\ell$ which is to say that the consumer will purchase an amount of insurance equal to the loss of privacy she would endure under an attack.
\end{proof}
\begin{prop}
  \label{eq:unfair}
  Suppose that the consumer is offered insurance at the rate $c>1-\eta$, i.e. at a rate higher than the probability of a successful attack. Then the consumer will not purchase the full insurance, i.e. $\beta^\ast<\ell$.
\end{prop}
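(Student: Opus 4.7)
The plan is to mirror the structure of the proof of Proposition~\ref{prop:fair}, exploiting the KKT system in \eqref{eq:KKT} but with the strict inequality $c > 1 - \eta$ replacing the equality $c = 1 - \eta$. First I would dispense with the boundary case: if $\beta^\ast = 0$, then $\beta^\ast = 0 < \ell$ holds trivially, since $\ell > 0$ by hypothesis. So the interesting case is $\beta^\ast > 0$, in which complementary slackness forces $\lambda = 0$ and the stationarity condition collapses to the equality
\begin{equation*}
\eta c \, \tilde{U}'(y - \beta^\ast c) = (1-\eta)(1-c) \, \tilde{U}'(y + (1-c)\beta^\ast - \ell).
\end{equation*}

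The key algebraic step is to translate the premium assumption $c > 1-\eta$ into a statement about this ratio. Rearranging, $c > 1 - \eta$ is equivalent to $\eta + c > 1$, which after expanding is equivalent to $\eta c > (1-\eta)(1-c)$. Dividing the stationarity equality through by $\eta c$ therefore yields
\begin{equation*}
\tilde{U}'(y - \beta^\ast c) = \frac{(1-\eta)(1-c)}{\eta c} \, \tilde{U}'(y + (1-c)\beta^\ast - \ell) < \tilde{U}'(y + (1-c)\beta^\ast - \ell),
\end{equation*}
where the strict inequality uses that $\tilde{U}' > 0$ (since $\tilde{U}$ is increasing).

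The final step uses strict concavity of $\tilde{U}$: $\tilde{U}'$ is strictly decreasing, so the inequality on the values of $\tilde{U}'$ reverses when passed to the arguments, giving $y - \beta^\ast c > y + (1-c)\beta^\ast - \ell$. Canceling $y$ and rearranging collapses the left- and right-hand $\beta^\ast$ terms to $-\beta^\ast$, yielding $\beta^\ast < \ell$, as desired. The main obstacle is purely bookkeeping: making sure the direction of the ratio $(1-\eta)(1-c)/(\eta c)$ is correctly tied to the premium inequality and that the monotonicity of $\tilde{U}'$ is applied in the right direction. No new convexity arguments are needed beyond those already used in Proposition~\ref{prop:fair}, since the problem \eqref{eq:cinsp} is a concave maximization and the KKT conditions are sufficient.
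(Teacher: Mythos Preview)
Your proof is correct. The paper argues by contradiction: it supposes $\beta^\ast = \ell$, substitutes into the stationarity condition so that both arguments of $\tilde{U}'$ collapse to $y - \ell c$, and then observes that the resulting expression $(-\eta c + (1-\eta)(1-c))\,\tilde{U}'(y-\ell c)$ is strictly negative because $c > 1-\eta$ and $\tilde{U}' > 0$, violating the first-order condition. Your direct route---rewriting $c > 1-\eta$ as the ratio bound $(1-\eta)(1-c)/(\eta c) < 1$ and then inverting the strict monotonicity of $\tilde{U}'$ on the stationarity equality---uses exactly the same ingredients but is slightly more complete: it rules out every $\beta^\ast \geq \ell$ in one stroke and disposes of the boundary case $\beta^\ast = 0$ explicitly, whereas the paper's contradiction literally excludes only the single value $\beta^\ast = \ell$ and leaves the passage to $\beta^\ast < \ell$ implicit (relying tacitly on concavity of the objective). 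Both arguments are equally elementary; yours is a bit tighter.
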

\begin{proof}
  Suppose that the consumer is offered privacy insurance at a rate $c>1-\eta$ and that the optimal choice for the consumer is $\beta^\ast=\ell\geq 0$. Then, first-order optimality conditions imply that 
  \begin{equation}
    -\eta \tilde{U}'(y-\ell c) c+(1-\eta)\tilde{U}'(y-\ell c)(1-c)=0
    \label{eq:prop2-1}
  \end{equation}
  However, since $c>1-\eta$ and $\tilde{U}$ is increasing, from \eqref{eq:KKT1} we have
  \begin{equation}
    (-\eta c+(1-\eta)(1-c))\tilde{U}'(y-\ell c)<0
    \label{eq:prop2-2}
  \end{equation}
  so that, in fact, the optimal $\beta$ has to be less than the loss experienced, i.e. $\beta^\ast<\ell$.
\end{proof}

\subsection{Analysis of the Insurer's Decision}
Let us now consider the case of a third-party insurance company offering offering privacy insurance to the consumer which protects them against losses due to attacks. In a way, insurance allows the consumer to \emph{hedge their bet} against selecting a contract with a low-privacy setting. 

We consider a similar setup as before: the consumer's utility function $\tilde{U}$ is strictly concave, increasing and twice differentiable and for the sake of analysis we assume that $\tilde{U}(0)=0$. We consider a scenario in which the insurer faces two types: high-risk consumer $\theta_h$ and low-risk consumer $\theta_l$. That is to say we are assuming that there is a portion of the population that is more likely to be attacked, i.e. the risky consumers, possibly because they engage in high-risk behavior or due to the fact that they selected a low-privacy setting contract with the utility company. The consumer again has an initial amount of private information with value $y$ and with probability $1-\eta_j$ some of her private information is exposed resulting in a loss $\ell$ where $j=h,l$ indicates the consumer's type. We assume that $1-\eta_l<1-\eta_h$. 

We will assume that the insurer has a prior over the distribution of types. In particular, we assume that the risky type $\theta_h$ occurs in the population with probability $p$ and that $p>0$. 

Suppose we are given an insurance contract $(\alpha_a, \alpha_n)$ where $\alpha_a$ is the compensation to the consumer given that a successful attack occurred and $\alpha_n$ is the neutral case (no attack). Let $X$ be a random variable representing the consumer's wealth such that with probability $1-\eta_i$ it takes value $y-\ell+\alpha_a$ and with probability $\eta_i$ it takes value $y-\alpha_n$. Then, the consumers expected utility is
\begin{equation}
  E[\tilde{U}(X)]=(1-\eta_i)\tilde{U}(y-\ell+\alpha_a)+\eta_i\tilde{U}(y-\alpha_n)
  \label{eq:expectedU}
\end{equation}
Note that in the previous subsection we analyzed the consumer's decision given a insurance contract of the form 
\begin{equation}
\alpha_a=(1-c)\beta, \ \ \alpha_n=\beta c
  \label{eq:specContract}
\end{equation}
The insurer is a monopolist whose expected cost is 
\begin{align}
  \Pi(\alpha_a^h, \alpha_n^h,\alpha_a^l, \alpha_n^l)=& p\left(-(1-\eta_h)\alpha_a^h+\eta_h\alpha_n^h\right)\notag\\
  &\quad+(1-p)\left( -(1-\eta_l)\alpha_a^l+\eta_l\alpha_n^l \right)
  \label{eq:expectedMono}
\end{align}
In the case of asymmetric information, i.e. the insurer does not know the consumer's type, the optimization problem he must solve is 
\begin{align}
  \max_{\{(\alpha_a^j, \alpha_n^j)\}_{j=h,l}}&\Pi(\alpha_a^h, \alpha_n^h,\alpha_a^l, \alpha_n^l)\tag{P-5} \label{eq:optP-5}\\
  \text{s.t.}\qquad& (1-\eta_i)\tilde{U}(y-\ell+\alpha_a^i)+\eta_i \tilde{U}(y-\alpha_n^i)\notag\\
  &\quad \geq (1-\eta_i)\tilde{U}(y-\ell+\alpha_a^j)+\eta_i\tilde{U}(y-\alpha_n^j),\notag\\
  &\qquad\ i,j\in\{h,l\}, \ i\neq j\tag{IC}\label{eq:IC}\\
  &(1-\eta_i)\tilde{U}(y-\ell+\alpha_a^i)+\eta_i\tilde{U}(y-\alpha_n^i)\notag\\
  &\quad \geq (1-\eta_i)\tilde{U}(y-\ell)+\eta_i\tilde{U}(y), \ i\in \{h,l\}\tag{IR}\label{eq:IR}
 \end{align}
 Constraints labeled \eqref{eq:IC} are the incentive compatibility constraints and constraints \eqref{eq:IR} are the individual rationality constraints. Both are similar to those presented in Section \ref{subsec:contract}. Incentive compatibility ensures that the consumer will report their type truthfully and the individual rationality constraint ensures that the consumer will participate.

 Following a similar reasoning as in Section~\ref{subsec:contract}, we can reduce the optimization problem \eqref{eq:optP-5} by reasoning about the constraint set defined by \eqref{eq:IC} and \eqref{eq:IR}. In particular, we argued that the high-privacy type's incentive compatibility constraint was active and that the low-privacy type's individual rationality constraint was active. In addition, we showed the other two constraints \eqref{eq:IC2} and \eqref{eq:IR1} could be removed. Now, in the insurance case, since $1-\eta_l<1-\eta_h$, the incentive compatibility constraint for the risk type is active and the individual rationality constraint for the safe type is active, i.e.
 the constraint set for \eqref{eq:optP-5} becomes
 \begin{align}
   & (1-\eta_h)\tilde{U}(y-\ell+\alpha_a^h)+\eta_h \tilde{U}(y-\alpha_n^h)\notag\\
   &\quad = (1-\eta_h)\tilde{U}(y-\ell+\alpha_a^l)+\eta_h\tilde{U}(y-\alpha_n^l)\tag{IC-h}\\
   &(1-\eta_l)\tilde{U}(y-\ell+\alpha_a^l)+\eta_l\tilde{U}(y-\alpha_n^l)\notag\\
   &\quad = (1-\eta_l)\tilde{U}(y-\ell)+\eta_l\tilde{U}(y)\tag{IR-l}
   \label{eq:newconstraintP5}
 \end{align}
 Let us try to restate the problem in a way which allows us to characterize the solutions. Since we have assumed that $\tilde{U}$ is strictly concave, increasing and twice differentiable, we can define $W$ be its inverse, where $W'>0$ and $W''>0$. Further, define
\begin{equation}
  \tilde{U}_a^i=\tilde{U}(y-\ell+\alpha_a^i)\ \ \text{ and }\ \ \tilde{U}_n^i=\tilde{U}(y-\alpha_n^i).
  \label{eq:Uai}
\end{equation}
The transformed utility is
\begin{align}
  \widetilde{\Pi}(\tilde{U}_a^h, \tilde{U}_n^h, \tilde{U}_a^l, \tilde{U}_n^l)&=p\big(-\eta_hW(\tilde{U}_n^h)-(1-\eta_h)W(\tilde{U}_a^h)\notag\\
  &\ +x-(1-\eta_h)\ell\big) +(1-p)\big(-\eta_lW(\tilde{U}_n^l)\notag\\
  &\ \ -(1-\eta_l)W(\tilde{U}_a^l)+x-(1-\eta_l)\ell\big) 
  \label{eq:newcostIns}
\end{align}
Then problem \eqref{eq:optP-5} becomes
\begin{align}
  \label{eq:optP-6}
  \max_{\{(\tilde{U}_a^i, \tilde{U}_n^i)\}_{i=h,l}}&\widetilde{\Pi}(\tilde{U}_a^h, \tilde{U}_n^h, \tilde{U}_a^l, \tilde{U}_n^l)\tag{P-6}\\
  \text{s.t.}&\ (1-\eta_h)\tilde{U}_a^h+\eta_h \tilde{U}_n^h= (1-\eta_h)\tilde{U}_a^l+\eta_h\tilde{U}_n^l\notag\\
  &(1-\eta_l)\tilde{U}_a^l+\eta_l\tilde{U}_n^l= (1-\eta_l)\tilde{U}(y-\ell)+\eta_l\tilde{U}(y)\notag
\end{align}
The Lagrangian of the optimization problem is 
\begin{align}
  L&(\tilde{U}_a^h, \tilde{U}_n^h, \tilde{U}_a^l, \tilde{U}_n^l, \lambda_1, \lambda_2)=\widetilde{\Pi}(\tilde{U}_a^h, \tilde{U}_n^h, \tilde{U}_a^l, \tilde{U}_n^l)\notag\\
  &\quad+\lambda_1((1-\eta_h)\tilde{U}_a^h+\eta_h \tilde{U}_n^h-(1-\eta_h)\tilde{U}_a^l-\eta_h\tilde{U}_n^l)\notag\\
  &\quad+\lambda_2((1-\eta_l)\tilde{U}_a^l+\eta_l\tilde{U}_n^l-(1-\eta_l)\tilde{U}(y-\ell)).
  \label{eq:lag}
\end{align}
\begin{prop}
  Given the probabilities $1-\eta_j$, $j=h,l$ that the consumer of type $j$ will experience a privacy breach, if the insurer solves the optimization problem \eqref{eq:optP-6}, then the high-risk consumer will be fully insured and the low-risk consumer will not be fully insured.
\end{prop}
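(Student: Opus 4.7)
The plan is to analyze the first-order Karush-Kuhn-Tucker conditions for the Lagrangian \eqref{eq:lag} of problem \eqref{eq:optP-6}, which is a strictly concave maximization in the transformed utilities subject to two linear equality constraints. Taking partials of $L$ with respect to the four variables and setting them to zero yields
\begin{align*}
W'(\tilde{U}_a^h) &= \lambda_1/p = W'(\tilde{U}_n^h),\\
(1-p)(1-\eta_l)W'(\tilde{U}_a^l) &= \lambda_2(1-\eta_l) - \lambda_1(1-\eta_h),\\
(1-p)\eta_l\,W'(\tilde{U}_n^l) &= \lambda_2\eta_l - \lambda_1\eta_h.
\end{align*}

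From the first line I immediately obtain $W'(\tilde{U}_a^h) = W'(\tilde{U}_n^h)$. Because $\tilde{U}$ is strictly increasing, its inverse $W$ is strictly increasing, so $W'>0$ everywhere, and the hypothesis $W''>0$ makes $W'$ strictly monotone. Two useful consequences follow: first, $\lambda_1 = p\,W'(\tilde{U}_a^h) > 0$, which I will invoke below; second, $\tilde{U}_a^h = \tilde{U}_n^h$. Inverting $\tilde{U}$ on the latter equality gives $y-\ell+\alpha_a^h = y-\alpha_n^h$, i.e.\ $\alpha_a^h+\alpha_n^h = \ell$, which is precisely the full-insurance condition for the high-risk type.

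Turning to the low-risk type, I would solve the last two stationarity conditions for $W'(\tilde{U}_a^l)$ and $W'(\tilde{U}_n^l)$ and subtract; the $\lambda_2$-terms cancel and a short calculation gives
\[
W'(\tilde{U}_a^l) - W'(\tilde{U}_n^l) \;=\; \frac{\lambda_1}{1-p}\cdot\frac{\eta_h-\eta_l}{\eta_l(1-\eta_l)}.
\]
Under the hypothesis $1-\eta_l<1-\eta_h$, i.e.\ $\eta_h<\eta_l$, the right-hand side is strictly negative (using $\lambda_1>0$), so $W'(\tilde{U}_a^l)<W'(\tilde{U}_n^l)$. Strict monotonicity of $W'$ then gives $\tilde{U}_a^l < \tilde{U}_n^l$, and applying $\tilde{U}^{-1}$ yields $\alpha_a^l+\alpha_n^l<\ell$: the low-risk consumer is strictly under-insured.

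The main conceptual obstacle is not the KKT computation itself but rather justifying the reduction to \eqref{eq:optP-6}, i.e.\ checking that at any optimum of \eqref{eq:optP-5} only (IC-h) and (IR-l) bind and the other two constraints are slack and can be dropped. The paper sketches this in the paragraph preceding \eqref{eq:optP-6}; I would formalize it paralleling Section~\ref{subsec:contract}, arguing that since $\eta_h<\eta_l$ the risky type values any given insurance bundle at least as much as the safe type, forcing (IC-h) to bind, while if (IR-l) were slack the insurer could uniformly shrink $\alpha_a^l$ and $\alpha_n^l$ and strictly improve its objective without breaking (IC-h). Once this reduction is in hand, the KKT argument above completes the proof.
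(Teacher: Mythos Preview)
Your proposal is correct and follows essentially the same approach as the paper: you write the four first-order conditions for the Lagrangian of \eqref{eq:optP-6}, use the two high-risk conditions to get $W'(\tilde U_a^h)=W'(\tilde U_n^h)$ (hence full insurance and $\lambda_1=pW'(\tilde U_a^h)>0$), and then combine the two low-risk conditions to obtain the sign of $W'(\tilde U_a^l)-W'(\tilde U_n^l)$. Your elimination of $\lambda_2$ by direct subtraction is algebraically equivalent to the paper's substitution route and yields the same identity, and your concluding paragraph on the binding-constraint reduction is extra commentary beyond what the proposition (stated for \eqref{eq:optP-6}) requires.
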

\begin{proof}
  We first show that the risky type will be fully insured. Taking the derivative of the Lagrangian with respect to $\tilde{U}_a^h$ and $\tilde{U}_n^h$ we get the following two equations:
\begin{align} 
  0&=-p(1-\eta_h)W'(\tilde{U}_a^h)+\lambda_1(1-\eta_h)\label{eq:uar}\\
  0&=-p\eta_h W'(\tilde{U}_n^h)+\lambda_1\eta_h
  \label{eq:2eq}
\end{align}  
Solving for $\lambda_1$ in the first equation and plugging it into the second, we get $\tilde{U}_a^h=\tilde{U}_n^h$ so that $\ell-\alpha_a^h=\alpha_n^h$, i.e. the amount the high-risk type pays for insurance is equal to the compensation minus the loss in the event of a privacy breach. Thus, the high-risk type will be fully insured. 

Now, we show that the low-risk type will not be fully insured. Taking the derivative of the Lagrangian with respect to $\tilde{U}_a^l$ and $\tilde{U}_n^l$, we get
\begin{align}
  0&=-(1-\eta_l)(1-p)W'(\tilde{U}_a^l)-\lambda_1(1-\eta_h)+\lambda_2(1-\eta_l)\label{eq:ual}\\
  0&=-(1-p)\eta_lW'(\tilde{U}_n^l)-\lambda_1\eta_h+\lambda_2\eta_l
  \label{eq:2eq2}
\end{align}
From \eqref{eq:uar}, we solved for $\lambda_1=pW'(\tilde{U}_a^h)$. By plugging in $\lambda_1$ into \eqref{eq:ual}, solving for $\lambda_2$ and plugging both $\lambda_1$ and $\lambda_2$ into \eqref{eq:2eq2}, we get the following expression:
\begin{align}
  0= &W'(\tilde{U}_a^h)p\left( -\eta_h+\eta_l\frac{1-\eta_h}{1-\eta_l} \right)\notag\\
  &\quad+\eta_l(1-p)(W'(\tilde{U}_a^l)-W'(\tilde{U}_n^l))
  \label{eq:eq3}
\end{align}
Since $\eta_l>\eta_h$ and $W'$ is increasing by assumption, the above equation implies that 
\begin{equation}
  \tilde{U}_n^l-\tilde{U}_a^l>0
  \label{eq:nofull}
\end{equation}
and hence the low-risk type does not fully insure.
\end{proof}
The above proposition tells us that in order to keep the high-risk type from masking as a low-risk type, the insurer must make the contract for the low-risk type unappealing to the high-risk type.

We remark that the analysis in this section can be applied to the case where the utility company is purchasing insurance as well. In particular, if the utlity company has not invested in a lot of security or tjeu are not following the best practices recommendations, e.g. NIST-IR 7628 \cite{group:2010aa}, then they are engaging in \emph{risky} behavior. The insurance company will not know a propri whether or not the utility company is high-risk type. Through the design of insurance contracts the insurance company can asses the utility's type while offering contracts that maximize their own utilty. 

\section{Conclusion}
\label{sec:conclusion}
Utilizing our results on the fundamental limits of non-intrusive load monitoring, we provide a novel upper bound on the probability of a successful privacy breach. 
Under the privacy metering policy in which sampling rate variation is used, we study how the performance of direct load control degrades using the $\mc{H}_\infty$ norm. This provided us with a metric for understanding how sampling rate affects the quality of direct load control.
Using this metric along with the upper bound on the probability for a successful privacy breach, we design a screening mechanism for the problem of obtaining the consumer's type when there is asymmetric information. Further, we design insurance contracts using the probability of successful privacy breach given that in the population of consumers there is both high-risk and low-risk consumers. 

This work opens up a number of questions in the area of privacy metrics as well as customer segmentation and targeting. 
We considered only two-type models in both the design of contracts. We are currently looking at the theory for a continuum of types.
The screening problem with a continuum of types results in a problem that resembles a partial differential equation constrained optimal control problem. We are developing numerical techniques to solve this problem. 
We also assumed that the utility company and private insurer knew the distribution of types in the population. We are currently developing algorithms for learning these probabilities using data-drive techniques. 
In addition, we considered that the utility would offer a contract solely based on privacy settings whereas in reality the contract would normally contain additional items such as maximum power consumption, rate, etc. Consumers in the population may value these goods differently. In this setting, the screening problem would be come multi-dimensional~\cite{basov:2005aa}. We are exploring this in the context of privacy-aware incentive design for behavior modification. 



\bibliographystyle{IEEEtran}
\bibliography{2014CDC}

\end{document}